\setlist[enumerate]{labelindent=--.5in,leftmargin=20pt}
\setlist[itemize]{labelindent=--.5in,leftmargin=13pt}
\definecolor{green}{RGB}{0,127,0}
\definecolor{red}{RGB}{191,0,0}
\newcommand{\B}[1]{\mathbf {#1}}
\newcommand{\C}[1]{\mathcal {#1}}
\newcommand\OP{\operatorname}
\newtheorem{theorem}{Theorem}
\newtheorem{lemma}[theorem]{Lemma}
\theoremstyle{definition}
\newtheorem{definition}[theorem]{Definition}
\newtheorem{question}{Question}
\title[On approximation properties of semidirect products]{On approximation properties of semidirect products of groups}
\author[G. Arzhantseva]{Goulnara Arzhantseva}
\address{Universit\"at Wien\\
Fakult\"at f\"ur Mathematik\\
Oskar--Morgenstern--Platz~1\\ 
1090 Wien, Austria}
\thanks{The research of both authors was supported in part by the Swiss \textsc{\MakeLowercase{NSF}},
under Sinergia grant \textsc{\MakeLowercase{CRSI22--130435}}. The research of the first author was
supported in part by her \textsc{\MakeLowercase{ERC}} grant \textsc{\MakeLowercase{ANALYTIC}} no. 259527.}
\email{goulnara.arzhantseva@univie.ac.at}
\author{Światosław R. Gal}
\address{Uniwersytet Wrocławski\\
Instytut Matematyczny\\
pl.~Grunwaldzki~2/4\\
50--384 Wrocław, Poland}
\thanks{The research of the second author was
supported in part by Polish National Science Center (\textsc{\MakeLowercase{NCN}}) grants
\textsc{\MakeLowercase{2012/06/A/ST1/00259}} and \textsc{\MakeLowercase{2017/27/B/ST1/01467}}.}
\email{sgal@math.uni.wroc.pl}
\keywords{Residually finite groups, surjunctive and sofic groups, semidirect product}
\subjclass[2010]{\textsc{\MakeLowercase{20E26, 20E22, 20E25, 37B05, 37B10}}}
\begin{document}

\begin{abstract}
Let  $\C R$ be a class of groups closed under taking (split) extensions with
finite kernel and fully residually $\C R$--groups.  We prove that  $\C R$
contains all (split) $\{$~finitely generated residually finite~$\}$--by--$\C R$ groups.
It follows that a split extension with a 
finitely generated residually finite
kernel and a surjunctive quotient  is surjunctive. This remained unknown even for
direct products of a surjunctive group with the integers~$\B Z$. 

\bigskip
\centerline{\textit{\textbf{{Sur les propriétés d'approximation des produits semi-directs des groupes}}}}
\medskip\noindent
\textsc{Rrésumé}: Soit $\C R$ une classe de groupes fermée par rapport aux extensions (scin\-dées) avec
un noyau fini et par rapport aux groupes multi-résiduellement $\C R$.  Nous montrons que $\C R$
contient toutes les extensions (scindées) de type $\{$~finiment engendré résidu\-ellement fini~$\}$--par--$\C R.$ 
Nous obtenons en corollaire qu'une extension scindée avec un noyau finiment engendré
résiduellement fini et un quotient surjonctif est surjonctive. Cela restait inconnu, même pour les
produits directs d'un groupe surjonctif avec les entiers~$\B Z$. 
\end{abstract}

\maketitle

The concept of approximation is one of the most fundamental in science.  In
nowadays geometric group theory it refers to algebraic approximations such as
residual finiteness, to approximations in the space of marked groups such as
local embeddability into a given class of groups,
e.g.~\cite{MR1458419,MR2151593}, or to metric approximations such as soficity
and hyperlinearity \cite{MR2460675,MR3408561}.

A current intensive study of sofic and hyperlinear groups, motivated by a
variety of deep results on these wide -- and still mysterious: is there a non
sofic/hyperlinear group? -- classes of groups, has given rise to a number of
questions on group-theoretical properties both of these recently discovered
groups and of their fundamental predecessors. One of such a major question is
whether or not a given class of groups is preserved under taking (split or, in other words,
semidirect) extensions.

Every semidirect extension of a residually finite group with a finitely
generated residually finite kernel is residually finite by an elegant result of
Mal'cev \cite{Malcev}. A more general is a class of groups locally embeddable
into finite ones (briefly, \textsc{lef}-groups).  There exist examples of semidirect
extensions of \textsc{lef}-groups which are not \textsc{lef} \cite{MR1458419}.  With regard to
metric approximations, the only known and positive results on extensions are on
(not necessarily split) extensions by an amenable group \cite{MR2460675,MR3408561}.

Our main result is the following general theorem which can be then applied to
many groups, including all those just mentioned, and, in particular, to
surjunctive groups (see the definition below), which came to light after
Gromov's spectacular proof of the surjunctivity of all sofic
groups \cite{MR1694588}.

\begin{theorem}\label{thm:main}
Assume that $\C R$ is a class of groups such that
\begin{itemize}
\item fully residually $\C R$--groups belong to $\C R$ and
\item (split) extension of $\C R$--groups with finite kernel belong to $\C R$.
\end{itemize}
Let $G$ be a (split) extension with a finitely generated
residually finite kernel $K$ and a quotient group $Q$ in $\C R$:
$$
1 \to K\hookrightarrow G \twoheadrightarrow Q\to 1.
$$
  Then $G$ belongs to $\C R$.
\label{t:sdp}
\end{theorem}

\begin{proof}
Take a finite subset $S\subseteq G$. Define $F:=S^{-1}S$ and choose a finite
index subgroup $K_0\trianglelefteq K$ such that $F\cap K_0=\{e\}$.  Since $K_0$
is of finite index and $K$ is finitely generated one can find a finite index subgroup
$K_1\leqslant K_0$ which is characteristic, that is, $\OP{Aut}(K)$--invariant.
Then, for $N:=K/K_1$ (note that, by construction,  $K_1$ is normal in $G$, and hence, in $K$), the
group $G_1:=G/K_1$ has the following properties:
\begin{enumerate}
\item $S$ injects into $G_1;$\label{p:inj}
\item $G_1$ is a (split) extension of $Q$ with finite kernel $N$.\label{p:ext}
\end{enumerate} 
Denote the quotient map by $\pi\colon
G\twoheadrightarrow G_1$.  Let $s_1,s_2\in S$.  If $\pi(s_1)=\pi(s_2),$ then
$\pi(s_1^{-1}s_2)=e$.  However, $s_1^{-1}s_2\in S^{-1}S\cap\OP{ker}(\pi) =
F\cap K_1\subseteq F\cap K_0=\{e\}$.  Thus, $s_1=s_2$ and we obtain (\ref{p:inj}).  For
(\ref{p:ext}), we observe that the quotient $G_1/N$ is isomorphic to $G/K\cong Q$ and $N$ is finite by construction. In addition, a splitting homomorphism $G/K\to G$ induces a splitting homomorphism $G_1/N\cong (G/K_1) / (K/K_1) \to G/K_1$.

By (\ref{p:ext}) and the hypothesis on $\C R$,   $G_1$ belongs to $\C R$.  As a consequence, since $S$ is an arbitrary
finite subset of $G$, the group $G$ is fully residually $\C R$. Thus, $G$
belongs to $\C R$.
\end{proof}

\begin{lemma}
Assume that $\C R$ is a class of groups such that
\begin{itemize}
\item direct products of a finite and of an $\C R$--group belong to $\C R$ and
\item all finite index subgroups of $\C R$--groups belong to $\C R$.
\end{itemize}
Then any semidirect extension with finite kernel of an $\C R$--group belongs to
$\C R$.\label{l:fin--ext}
\end{lemma}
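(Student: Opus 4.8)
The plan is to use the two hypotheses in the only direction available to us: the second one lets us descend from an $\C R$--group to its finite index subgroups, but not ascend to finite index overgroups, so the task is really to realise our split extension as a finite index subgroup of a group already known to lie in $\C R$. Write $G=F\rtimes Q$ with $F$ finite, $Q\in\C R$, and let $\varphi\colon Q\to\OP{Aut}(F)$ be the defining action. Since $F$ is finite, $\OP{Aut}(F)$ is finite, hence $Q_0:=\ker\varphi$ has finite index in $Q$. Moreover $Q_0$ is normal in the whole of $G$: it is the kernel of the homomorphism $G\to F\rtimes\OP{Aut}(F)$, $(f,q)\mapsto(f,\varphi(q))$, and, by the same token, $G/Q_0\cong F\rtimes(Q/Q_0)$ is finite.

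First I would form the homomorphism $\psi\colon G\to(G/F)\times(G/Q_0)$ obtained by pairing the two canonical quotient maps, and identify $G/F$ with $Q$. Its kernel is $F\cap Q_0$, which is trivial because $Q_0\leqslant Q$ while $F\cap Q=\{e\}$ inside the semidirect product; thus $\psi$ embeds $G$ into $Q\times(G/Q_0)$. By the first hypothesis this target group, being the direct product of the $\C R$--group $Q$ with the finite group $G/Q_0$, belongs to $\C R$.

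It then remains to check that $\psi(G)$ has finite index in $Q\times(G/Q_0)$, and then the second hypothesis finishes the argument since $G\cong\psi(G)$. A short order count suffices: $\psi$ carries $Q_0$ isomorphically onto $Q_0\times\{e\}$, and since $[\psi(G):Q_0\times\{e\}]=[G:Q_0]=|G/Q_0|$ while $[Q\times(G/Q_0):Q_0\times\{e\}]=[Q:Q_0]\cdot|G/Q_0|$, the image $\psi(G)$ has index $[Q:Q_0]<\infty$.

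The verifications that $Q_0\trianglelefteq G$ and that the index is as claimed are routine; the one genuinely non-obvious point -- and the step I expect to be the crux -- is the choice of the embedding $\psi$. The tempting move is to use the \emph{finite index subgroup} $F\times Q_0\leqslant G$ (a direct product because $Q_0$ centralises $F$), but that only places an $\C R$--subgroup inside $G$, which is of no use here; the simultaneous--quotient map reverses the inclusion, turning $G$ into a finite index subgroup of an $\C R$--group, which is exactly what the second hypothesis can act upon.
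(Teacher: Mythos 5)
Your proof is correct and follows essentially the same route as the paper: both realize $G=F\rtimes Q$ as a finite-index subgroup of a direct product of a finite group with the $\C R$--group $Q$, using the retraction $G\to Q$ as one coordinate, and then apply the two hypotheses in the obvious order. The only (cosmetic) difference is the choice of finite factor -- the paper pairs the retraction with the permutation action on cosets, embedding $G$ into $\OP{Sym}(G/Q)\times Q$, whereas you pair it with the quotient by the kernel $Q_0$ of the action on $F$, embedding $G$ into $Q\times(G/Q_0)$; both give the required finite-index embedding.
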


\begin{proof}
Let $G$ be a split extension with finite kernel of an $\C R$--group.  Then $G$ admits a retraction $\varphi \colon G\to H$
on its finite index subgroup which belongs to $\C R$.  Consider the homomorphism $G\hookrightarrow
\OP{Sym}(G/H)\times H\colon g\mapsto (fH\mapsto gfH, \varphi(g))$.  It is injective and the image has finite index in
$\OP{Sym}(G/H)\times H$ which belongs to $\C R$.  Thus, $G$ belongs to $\C R$.
\end{proof}

Let us apply our theorem to groups mentioned above. We begin with surjunctive
groups.

The concept of surjunctivity was introduced by Gottschalk \cite{MR0407821}  in
topological dynamics in 1973. Applied to actions of a discrete group $G$, it
can be viewed as an analogue of co--Hopf property of groups and of Artinian
modules.

\begin{definition}
A continuous dynamical system $(X,G)$ is called {\em surjunctive} if every
continuous injective map $f\colon X\to X$ commuting with the action of~$G$ is
surjective.
\end{definition}

\begin{definition}
Given a finite alphabet $\Sigma$ the associated {\em Bernoulli shift} is the
space~$\Sigma^G$ of~$\Sigma$--valued functions on~$G$.
%with the natural action of~$G$.
\end{definition}

\begin{definition}
A group~$G$ is called {\em surjunctive} if the Bernoulli shift $\Sigma^G$ is
surjunctive {\em for any} finite alphabet~$\Sigma$.
\end{definition}

All sofic groups are surjunctive \cite{MR1694588,MR1803462} and the class of
sofic groups is the largest known class of surjunctive groups. It is still
unknown whether every group is surjunctive and whether there exists a non-sofic
surjunctive group.  In particular, it is not clear whether there exists a group
$G$ such that $\{0,1\}^G$ is surjunctive but $\{0,1,2\}^G$ is~not.

A very few affirmative results are known regarding the class of surjunctive groups:
\begin{itemize}
\item Finite groups are surjunctive \cite[Proposition 3.1.3]{MR2683112}.
\item Every subgroup of a surjunctive group is surjunctive
\cite[Proposition 3.2.1]{MR2683112}.
\item Locally surjunctive groups are surjunctive \cite[Proposition 3.2.2]{MR2683112}.
In particular, $S_\infty=\bigcup\limits_{n\to\infty}S_n$, the group of finitely
supported permutations of $\B N$, is surjunctive.
\item Fully residually surjunctive groups are surjunctive
\cite[Lemma 3.3.4]{MR2683112}. In particular, Abelian groups and free groups
are surjunctive.
\end{itemize}

\begin{lemma}
Virtually surjunctive groups are surjunctive.  In particular, semidirect
extension with finite kernel of a surjunctive group is surjunctive.
\label{l:virt--surj}
\end{lemma}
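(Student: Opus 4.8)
The plan is to establish the first assertion — virtually surjunctive groups are surjunctive — and then deduce the second: in a semidirect extension $G=N\rtimes Q$ with $N$ finite and $Q$ surjunctive, the complement $Q$ is a subgroup of $G$ of index $|N|<\infty$, so $G$ is virtually surjunctive and hence surjunctive.

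So fix a finite-index subgroup $H\leqslant G$, of index $n$, with $H$ surjunctive, and fix a finite alphabet $\Sigma$; I must show the Bernoulli $G$-shift $\Sigma^G$ is a surjunctive dynamical system. Let $f\colon\Sigma^G\to\Sigma^G$ be continuous, $G$-equivariant and injective; the goal is to prove $f$ surjective. The key idea is to forget most of the $G$-action, keeping only the $H$-action, and to recognise $\Sigma^G$, as an $H$-system, as a Bernoulli shift over a larger (but still finite) alphabet. Concretely: choose a transversal $T$ for the right cosets $H\backslash G$, so that $G=\bigsqcup_{t\in T}Ht$ with $|T|=n$; left multiplication by $H$ preserves each right coset $Ht$ and acts on it simply transitively via $h\mapsto ht$. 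Hence $G\cong H\times T$ as left $H$-sets, with $H$ acting on the first factor. Passing to $\Sigma$-valued functions, this yields an $H$-equivariant homeomorphism $\Sigma^G\cong(\Sigma^T)^H$, where the target carries the Bernoulli shift action of $H$ with finite alphabet $\Sigma^T$ (of cardinality $|\Sigma|^n$).

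Under this identification the map $f$, being in particular $H$-equivariant, becomes a continuous, injective, $H$-equivariant self-map of the Bernoulli $H$-shift $(\Sigma^T)^H$. Since $H$ is surjunctive, this map is surjective; therefore $f$ is surjective. As $\Sigma$ was arbitrary, $\Sigma^G$ is surjunctive for every finite alphabet, i.e.\ $G$ is surjunctive. For the ``in particular'' statement, one then simply observes, as above, that a finite-kernel semidirect extension of a surjunctive group contains the latter as a finite-index subgroup. (Alternatively this is Lemma~\ref{l:fin--ext} applied with $\C R$ the class of surjunctive groups, whose two hypotheses both hold: finite-index subgroups of surjunctive groups are surjunctive because \emph{all} subgroups are, and a direct product of a finite group with a surjunctive group is virtually surjunctive, hence surjunctive by the first part.)

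The only point requiring care — and the crux of the argument — is the identification $\Sigma^G\cong(\Sigma^T)^H$: one must check that this bijection is simultaneously a homeomorphism for the product topologies and an isomorphism of $H$-shifts, so that continuity, injectivity and surjectivity all transport faithfully between the two pictures. This is routine, but it is where all the content lies; once it is in place, surjunctivity of $H$ does the rest with no further work.
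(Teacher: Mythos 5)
Your proof is correct and is essentially the paper's own argument: the paper likewise identifies $\Sigma^G$ with $\left(\Sigma^{G/H}\right)^{H}$ as $H$--spaces (your $(\Sigma^T)^H$ with $T$ a transversal) and observes that a $G$--equivariant map is in particular $H$--equivariant, so surjunctivity of $H$ applied to the enlarged alphabet finishes the proof. You merely spell out the coset decomposition and the topological checks that the paper leaves implicit.
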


\begin{proof}
Assume that $H\leqslant G$ is of finite index and $H$ is surjunctive.  Then we
have $\Sigma^G=\left(\Sigma^{G/H}\right)^H$, as $H$--spaces, and every
$G$--invariant map is $H$--invariant.  This yields the claim.
\end{proof}

It is unknown whether a product of two surjunctive groups is
surjunctive \cite{MR1803462}.  The question is open both for direct and free
products. A particular case of free products with group of integers~$\B Z$ is
open as well.

The concept of surjunctivity extends to a more broad context where the
space~$\Sigma^G$ of~$\Sigma$--valued functions on~$G$ is considered for
$\Sigma$ being an object in a {\em surjunctive category}~$\C C$.  This
naturally leads to the notion of {\em $\C C$-surjunctive
group} \cite{ChSilbCoorn}.  For instance, the above definition of surjunctive
groups corresponds to $\C C$ being the category of finite sets and if $\C C$ is
the category of finite-dimensional vector spaces over an arbitrary field one
obtains the notion of linear surjunctivity \cite[Chapter 8.14]{MR2683112}.

Without going into technical details (precise definitions are given in
\cite{ChSilbCoorn}) a surjunctive category is a category where every injective
$\C C$-endomorphism (self-map) is surjective.  Let us give examples of
surjunctive categories (see \cite[Example 7.3]{ChSilbCoorn} and references therein):
\begin{itemize}
\item the category of finite sets,
\item the category of finite-dimensional vector spaces over an arbitrary field,
\item the category of left Artinian modules over an arbitrary ring, of finitely
generated left modules over an arbitrary left-Arti\-ni\-an ring,
\item the category of affine algebraic sets over an arbitrary uncountable
algebraically closed field, or
\item the category of compact topological manifolds.
\end{itemize}

Let us note that all the categories above are closed under taking finite direct
products (cf. property (\textsc{\MakeLowercase{FP}}) in \cite[Section 3.1]{ChSilbCoorn}).  

Our
Lemma~\ref{l:virt--surj} remains valid for the class of $\C C$-surjunctive
groups with $\C C$ any  surjunctive category closed under taking finite direct products;
in particular, for any $\C C$ from the list above.

We extend Malcev's result who considered the case of $\C R$ being the class of
residually finite groups \cite{Malcev}.

\begin{theorem}
Let $\C R$ denote one of the following classes of groups:
\begin{itemize}
\item residually amenable groups,
\item groups locally embeddable into finite groups (that is, \textsc{lef}--groups),
\item initially subamenable groups (in other words, \textsc{lea}--groups),
\item sofic groups,
\item surjunctive groups, or
\item $\C C$-surjunctive groups, where $\C C$ is a surjunctive category closed
under taking finite direct products.
\end{itemize}
Then any semidirect extension of a group from class $\C R$ with residually
finite, finitely generated kernel belongs to $\C R$.
\end{theorem}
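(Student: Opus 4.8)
The plan is to derive every case directly from Theorem~\ref{t:sdp}. Indeed, if $\C R$ is one of the listed classes and $G:=K\rtimes Q$ is a semidirect product with $K$ finitely generated residually finite and $Q\in\C R$, then Theorem~\ref{t:sdp} will place $G$ in $\C R$ as soon as we know that (i) fully residually $\C R$--groups belong to $\C R$, and (ii) semidirect extensions of $\C R$--groups with finite kernel belong to $\C R$. So the whole argument reduces to verifying (i) and (ii) for each class on the list.

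For (ii) I would reuse the machinery already in place. For surjunctive groups, (ii) is exactly Lemma~\ref{l:virt--surj}; for $\C C$--surjunctive groups it is the extension of Lemma~\ref{l:virt--surj} recorded after the list of surjunctive categories, which applies since each such $\C C$ is closed under finite direct products. For residually amenable, \textsc{lef}, \textsc{lea}, and sofic groups I would instead invoke Lemma~\ref{l:fin--ext}: its two hypotheses --- closure under passage to finite index subgroups and closure under direct products with a finite group --- are classical for each of these classes, as each is closed under taking subgroups and under finite direct products, and a finite group lies in each of them.

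It remains to treat (i). When $\C R$ is the class of residually amenable groups, a fully residually $\C R$--group $G$ is again residually amenable: given $g\neq e$, pick $N\trianglelefteq G$ with $g\notin N$ and $G/N\in\C R$, then pull back through $G\twoheadrightarrow G/N$ a normal subgroup of $G/N$ that avoids the image of $g$ and has amenable quotient. When $\C R$ is \textsc{lef}, \textsc{lea}, or sofic, I would use that each of these classes coincides with the class of groups locally embeddable into a fixed class $\C F$ (finite, amenable, or sofic, respectively) and that each such class is itself closed under local embeddability; since a quotient $G/N\in\C R$ into which a finite subset injects is in particular a local embedding of that subset, being fully residually $\C R$ forces $G$ into the same class. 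For surjunctive groups, (i) is \cite[L.~3.3.4]{MR2683112}, and for $\C C$--surjunctive groups it is the corresponding permanence statement in \cite{ChSilbCoorn}.

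The main obstacle is precisely (i): one must be sure that approximating a group by \emph{quotients} lying in $\C R$ --- rather than by subgroups, or only in a metric sense --- suffices to put the group itself in $\C R$. For the local classes this is the closure of $\C F$--local embeddability under iteration; for the residually amenable case it is the pull-back argument above; and for the (categorical) surjunctive cases it is the cited results. Granting these inputs, each of the six classes satisfies both hypotheses of Theorem~\ref{t:sdp}, and the theorem follows, extending Mal'cev's theorem~\cite{Malcev} (the case of residually finite groups, which likewise satisfies (i) and (ii)).
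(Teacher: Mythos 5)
Your proof is correct and follows essentially the same route as the paper: reduce everything to the two hypotheses of Theorem~\ref{t:sdp} and verify them class by class, using Lemma~\ref{l:fin--ext} and Lemma~\ref{l:virt--surj} (and its $\C C$--surjunctive variant) for the finite-kernel extensions and the known closure of each class under full residuality. The only cosmetic difference is that for sofic groups the paper cites Proposition~7.5.14 of \cite{MR2683112} where you invoke Lemma~\ref{l:fin--ext}; both work, since sofic groups are closed under subgroups and under direct products with finite groups, and you additionally spell out the ``obvious'' closure arguments that the paper leaves to the reader.
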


\begin{proof}
Fully residually surjunctive groups are surjunctive by \cite[Lemma 3.3.4]{MR2683112}. A straightforward generalization of the proof of \cite[Lemma 3.3.4]{MR2683112} shows that fully residually $\C C$-surjunctive
groups are $\C C$-surjunctive. Analogous statements for other classes are obvious.

To check if an arbitrary semidirect extension with finite kernel of an $\C R$--group belongs to
$\C R$, we apply Lemma~\ref{l:fin--ext} in case of residually amenable,
\textsc{lef}, or \textsc{lea} groups, Lemma~\ref{l:virt--surj} in case of
surjunctive groups and also, as mentioned above, in case of $\C C$-surjunctive groups, as well as \cite[Proposition 7.5.14]{MR2683112} in case of
sofic groups. 

Thus, all the discussed classes satisfy the hypothesis of
Lemma \ref{l:fin--ext} and, therefore, that of the split case of Theorem \ref{t:sdp}.
\end{proof}

\begin{question}\label{q:fbs}
Let $G$ be a non-split extension with a finite kernel and a surjunctive (resp. $\C C$-surjunctive) quotient.  Is $G$ surjunctive (resp. $\C C$-surjunctive)?
\end{question}
A positive answer to this question, combined with the non-split case of Theorem~\ref{thm:main}, will imply that all such extensions with residually
finite, finitely generated kernel are surjunctive (resp. $\C C$-surjunctive). An analogous question for residually amenable,
\textsc{lef}, or \textsc{lea} groups has a negative answer (since there exist extensions with a finite kernel and a residually finite quotient which have Kazhdan's property (T), and therefore are not \textsc{lea}, e.g.~\cite{MR507760, MR528238}).  It remains open
for sofic and hyperlinear groups, c.f.~\cite[Question 4.1]{MR3934790}.
In a particular instance when the kernel is a finite cyclic group, a positive answer to  Question \ref{q:fbs} would imply the surjunctivity of Deligne's non-residually finite central extensions \cite{MR507760}, of its $SL_n$ analogues \cite{MR528238} and of its recent arithmetic analogues \cite{MR3887220}.

\subsection*{Acknowledgments}
The authors thank Benji Weiss for a helpful conversation and for pointing out that
the surjunctivity question was open even
for direct products of a surjunctive group with $\B Z$.

\bibliography{sdp}
\bibliographystyle{sc}

\end{document}